\theoremstyle{plain}
\newtheorem{theorem}{Theorem}
\newtheorem{proposition}[theorem]{Proposition}
\newtheorem{lemma}[theorem]{Lemma}
\theoremstyle{definition}
\newtheorem{definition}[theorem]{Definition}
\newtheorem{remark}[theorem]{Remark}
\newtheorem*{modelsituation}{Model Situation}
\numberwithin{theorem}{section}
  \def\tagform@#1{\maketag@@@{%
   \textbf{(\ignorespaces#1\unskip\@@italiccorr)}}}%
   \renewcommand{\eqref}[1]{\textup{\maketag@@@{(\ignorespaces%
        {\ref{#1}}\unskip\@@italiccorr)}}}
\let\@itemize@\itemize
\def\itemize{\@itemize@\parskip 0pt\relax}
\def\@listi{\leftmargin28.5pt\parsep 0pt\topsep 2pt plus1pt minus1pt
 \itemsep2pt plus2pt minus1pt}
\let\@listI\@listi
\DeclareFontFamily{OML}{rsfs}{\skewchar\font'177}
\DeclareFontShape{OML}{rsfs}{m}{n}{ <5> <6> rsfs5 <7> <8> <9> rsfs7
  <10> <10.95> <12> <14.4> <17.28> <20.74> <24.88> rsfs10 }{}
\DeclareMathAlphabet{\mathfs}{OML}{rsfs}{m}{n}
\newcommand{\co}{\colon\thinspace}
\newcommand{\Z}{\mathbb{Z}}
\newcommand{\R}{\mathbb{R}}
\newcommand{\la}{\langle}
\newcommand{\ra}{\rangle}
\DeclareMathOperator{\Fix}{Fix}
\DeclareMathOperator{\Cent}{Cent}
\begin{document}


\title{Morse theory and conjugacy classes of finite subgroups II}

\author[N.~Brady]{Noel Brady$^1$}
\address{Dept.\ of Mathematics\\
        University of Oklahoma\\
	Norman, OK 73019}
\email{nbrady@math.ou.edu}

\author[M.~Clay]{Matt Clay}
\address{Dept.\ of Mathematics\\
        University of Oklahoma\\
	Norman, OK 73019}
\email{mclay@math.ou.edu}

\author[P.~Dani]{Pallavi Dani}
\address{Dept.\ of Mathematics\\
        Louisiana State University \\
	Baton Rouge, LA 70803}
\email{pdani@math.lsu.edu}

\date{\today}

\begin{abstract}
  We construct a hyperbolic group containing a finitely presented
  subgroup, which has infinitely many conjugacy classes of
  finite-order elements.  

  We also use a version of Morse theory with high dimensional
  horizontal cells and use handle cancellation arguments to produce
  other examples of subgroups of CAT(0) groups with infinitely many
  conjugacy classes of finite-order elements.
\end{abstract}

\maketitle
\footnotetext[1]{N.\ Brady was partially supported by NSF grant no.\
  DMS-0505707} 


\section*{Introduction}

This paper is a continuation of our earlier work in \cite{BrCD}. In
that paper we showed how the construction of Leary--Nucinkis \cite{LN}
fits into a more general framework than that of right angled Artin
groups.  We used this more general framework to produce a CAT(0) group
containing a finitely presented subgroup with infinitely many
conjugacy classes of finite-order elements. Unlike previous examples
(which were based on right-angled Artin groups) our ambient CAT(0)
group did not contain any rank 3 free abelian subgroups.  In the
current paper (see Section~\ref{sec:hyper}), we produce a hyperbolic
group containing a finitely presented subgroup which has infinitely
many conjugacy classes of finite-order elements.

We work in the more general situation of Morse functions with high
dimensional horizontal cells in Sections~\ref{sec:fm}
and~\ref{sec:rips} of this paper. This allows us to see (in Section~\ref{sec:fm}) that the
original examples of Feighn--Mess \cite{feighn-mess} fit into the same
general framework as the examples of Leary--Nucinkis \cite{LN}. This
addresses a remark we made after Example~1.2 of~\cite{BrCD}.

In Section~\ref{sec:rips} we use Morse theory with horizontal
cells to see that a suitable modification of the Rips' construction
(suggested to the authors by Dani Wise) produces many new examples of
hyperbolic groups containing finitely generated subgroups with
infinitely many conjugacy classes of finite-order elements. The Morse
arguments used here involve a careful accounting of handle
cancellations.


\section{Counting conjugacy classes of finite-order elements} \label{sec:conj}

The following proposition describes a general algebraic situation
which ensures that a conjugacy class in some group will intersect a
subgroup in infinitely many conjugacy classes. In all the examples in
this paper, the target group $Q$ is just $\Z$, and the result is used
to count conjugacy classes of finite-order elements.

\begin{proposition}\label{prop:conj}
  Suppose $\varphi\co G \to Q$ is an epimorphism where
  $$[\Cent_Q(\varphi(\sigma)):\varphi(\Cent_G(\sigma))] = \infty$$ for
  some element $\sigma \in G$.  Then the conjugacy class of $\sigma$
  in $G$ intersects $\varphi^{-1}(\langle \varphi(\sigma) \rangle)$ in
  infinitely many $\varphi^{-1}(\langle \varphi(\sigma)
  \rangle)$-conjugacy classes.
\end{proposition}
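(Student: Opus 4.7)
The plan is to produce, inside $K := \varphi^{-1}(\la \varphi(\sigma) \ra)$, an infinite family of $G$-conjugates of $\sigma$ that are pairwise non-conjugate in $K$. Set $q = \varphi(\sigma)$ and $C = \Cent_Q(q)$. Two structural observations drive the argument. First, $\sigma$ commutes with itself, so $q \in \varphi(\Cent_G(\sigma))$ and hence $\la q \ra \subseteq \varphi(\Cent_G(\sigma))$. Second, homomorphisms send centralizers into centralizers, so $\varphi(\Cent_G(\sigma)) \subseteq C$. In particular, $\varphi(\Cent_G(\sigma))$ sits as a (possibly proper) subgroup of $C$, and the hypothesis says its index in $C$ is infinite.

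Next I would construct the conjugates. Pick coset representatives $\{q_i\}_{i \in \N}$ for distinct cosets of $\varphi(\Cent_G(\sigma))$ in $C$; there are infinitely many by hypothesis. Using surjectivity of $\varphi$, lift each $q_i$ to an element $g_i \in G$. Because $q_i \in C$ commutes with $q$, we have $\varphi(g_i \sigma g_i^{-1}) = q_i q q_i^{-1} = q \in \la q \ra$, so $g_i \sigma g_i^{-1}$ lies in the conjugacy class of $\sigma$ and in $K$.

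It remains to see that these elements represent distinct $K$-conjugacy classes. Suppose $k g_i \sigma g_i^{-1} k^{-1} = g_j \sigma g_j^{-1}$ for some $k \in K$. Then $g_j^{-1} k g_i \in \Cent_G(\sigma)$, and applying $\varphi$ yields
\[
q_j^{-1} \, \varphi(k) \, q_i \in \varphi(\Cent_G(\sigma)).
\]
Now $\varphi(k) \in \la q \ra$, and since $q_i, q_j \in C$ both commute with $q$ (and hence with every power of $q$), I can move $\varphi(k)$ past $q_j^{-1}$ to obtain $\varphi(k) \, q_j^{-1} q_i \in \varphi(\Cent_G(\sigma))$. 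Using $\la q \ra \subseteq \varphi(\Cent_G(\sigma))$, I can absorb the $\varphi(k)$ factor and conclude $q_j^{-1} q_i \in \varphi(\Cent_G(\sigma))$. By the choice of coset representatives, $i = j$.

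There is no serious obstacle; the content is careful bookkeeping of where the various centralizers sit. The one point to be attentive to is the rearrangement in the last paragraph: it is essential that $\varphi(k) \in \la q \ra$ genuinely centralizes both $q_i$ and $q_j$, which is exactly what makes $C$ (rather than all of $Q$) the right ambient group for counting cosets.
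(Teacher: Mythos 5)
Your proof is correct and follows essentially the same route as the paper's: lift elements of $\Cent_Q(\varphi(\sigma))$ representing distinct cosets of $\varphi(\Cent_G(\sigma))$, and show that $K$-conjugacy of the resulting conjugates of $\sigma$ forces the representatives into the same coset, using exactly the paper's two observations that $\langle\varphi(\sigma)\rangle\subseteq\varphi(\Cent_G(\sigma))$ and that the lifted elements centralize $\varphi(\sigma)$.
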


\begin{proof}
  Let $q_1,q_2 \in \Cent_Q(\varphi(\sigma))$ and fix $g_1,g_2 \in G$
  such that $\varphi(g_i) = q_i$.  Now $g_1\sigma g_1^{-1}$ is
  conjugate to $g_2\sigma g_2^{-1}$ in $\varphi^{-1}(\langle
  \varphi(\sigma) \rangle)$ if and only if there is an $h \in
  \varphi^{-1}(\langle \varphi(\sigma) \rangle)$ such that $hg_1\sigma
  g_1^{-1}h^{-1} = g_2\sigma g_2^{-1}$, equivalently $g_2^{-1}h g_1
  \in \Cent_G(\sigma)$.  Applying $\varphi$ we see this is equivalent
  to $q_2^{-1}\varphi(\sigma)^m q_1 \in \varphi(\Cent_G(\sigma))$ for
  some $m$.  Since $q_2 \in \Cent_Q(\varphi(\sigma))$, this is
  equivalent to $\varphi(\sigma)^m q_2^{-1}q_1 \in
  \varphi(\Cent_G(\sigma))$, in other words $ q_2^{-1}q_1 \in
  \varphi(\Cent_G(\sigma))$.

  Therefore, the conjugacy class of $\sigma$ in $G$ intersects
  $\varphi^{-1}(\langle \varphi(\sigma) \rangle)$ in at least
  $[\Cent_Q(\varphi(\sigma)):\varphi(\Cent_G(\sigma))]$ many
  $\varphi^{-1}(\langle \varphi(\sigma)\rangle)$-conjugacy classes.
  By hypothesis, this index is infinite, completing the proof.
\end{proof}

\begin{remark}\label{rk:app}
  Proposition \ref{prop:conj} generalizes Lemma 2 in \cite{Bridson00}
  where $Q$ is an infinite abelian group, and the hypothesis on the
  index of the centralizers is ensured by requiring
  $|\varphi(\Cent_G(\sigma))|< \infty$.
\end{remark}

\begin{modelsituation}
  Let $\varphi\co G \to {\mathbb Z}$ be an epimorphism where $G$ is a
  CAT(0) group.  Let $X$ be a CAT(0) metric space on which $G$ acts
  properly by isometries, and let $f\co X \to {\mathbb R}$ be a
  $\varphi$-equivariant (Morse) function,where ${\mathbb Z}$ acts on
  ${\mathbb R}$ by integer translations.  Let $\sigma \in G$ have
  finite order and the property that $f({\rm Fix}(\sigma)) \subset
  {\mathbb R}$ is compact.  This generalizes our model situation from
  \cite{BrCD} where we required that $\sigma$ had an isolated fixed
  point.  Then $g \in \Cent_G(\sigma)$ implies that $g$ acts on
  $\Fix(\sigma)$ and therefore by $\varphi$-equivariance of $f$,
  $\varphi(g)$ acts on $f(\Fix(\sigma))$.  Since $f(\Fix(\sigma))$ is
  compact and $\Z$ acts on $\R$ by translations, we see that
  $\varphi(\Cent_G(\sigma)) = 0$.  Therefore, applying Proposition
  \ref{prop:conj} to $\varphi\co G \to \Z$, the conjugacy class of
  $\sigma$ in $G$ intersects $\ker(\varphi)$ in infinitely many
  $\ker(\varphi)$-conjugacy classes, since $\varphi^{-1}(\langle
  \varphi(\sigma) \rangle) = \ker(\varphi)$.
\end{modelsituation}
  
  
\section{The Feighn--Mess examples}\label{sec:fm}

In this section we show how the Feighn--Mess examples
\cite{feighn-mess} fit into the more general Morse theory set-up in
Proposition \ref{prop:conj}.  This addresses the remark we made after
Example 1.2 in \cite{BrCD}.  Recall the Feighn--Mess examples are
subgroups of $(\mathbb{F}_2)^n \rtimes \langle \sigma \rangle$ where
the $\langle \sigma \rangle$ factor acts by an involution in each
$\mathbb{F}_2$ that fixes one generator and sends the other to its
inverse.

Let $Y$ be the wedge of two circles $S^1 = \R/\Z$ glued together at
the point $0$.  Label the two circles $a$ and $b$.  There is an order
two isometry $\sigma$ on $Y$ defined by the identity on $a$ and by
$\sigma(x) = 1-x$ on $b$.  The fixed point set of $\sigma$ consists of
two components: one is the circle $a$, the other is the point $p =
\frac{1}{2} \in b$.  This induces a coordinate-wise defined map
$\sigma_n\co Y^n \to Y^n$ where $Y^n$ is the direct product of $n$
copies of $Y$.  Here we see that the fixed point set of $\sigma_n$ has
$n+1$ homeomorphism types of components, a representative of each is
of the form $a_1 \times \cdots \times a_i \times p_{i+1} \times \cdots
\times p_n$, which is isometric to the $i$ dimensional torus
$(\R/\Z)^i$.
  
Define $h\co Y^n \to S^1 = \R/\Z$ by mapping each of the $n$ circles
labeled $a$ homeomorphically around $\R/\Z$, the $n$ circles labeled
$b$ to $0$ and extending linearly over the higher skeleta.  Clearly
$h$ is $\sigma_n$-equivariant.

Let $X$ be the universal cover of $Y^n$ and lift $h\co Y^n \to S^1$ to
$f\co X \to \R$.  Then $X$ is a CAT(0) cubical complex.  We can
identify $H = \pi_1(Y^n) = (\mathbb{F}_2)^n$ with a subgroup of the
group of isometries of $X$.  There are several different types of
lifts of the isometry $\sigma_n$ to an isometry
$\widetilde{\sigma_n}\co X \to X$ corresponding to the different
homeomorphism types of components in the fixed point set of
$\sigma_n$.  For each $0 \leq i \leq n$ we get a lift
$\widetilde{\sigma_{n,i}}$ whose fixed set is a lift of $a_1 \times
\cdots \times a_{i} \times p_{i+1} \times \cdots \times p_n$.  This
lift is isometric to an $i$-dimensional plane $\R^i$.  The image of
$\Fix(\widetilde{\sigma_{n,i}})$ under the map $f$ is $\R$ if $0< i
\leq n$ and a single point if $i=0$.  Let $\widetilde{\sigma_n} =
\widetilde{\sigma_{n,0}}$ be such that $f(\Fix(\widetilde{\sigma_n}))
= 0 \in \R$.  For $n=1$, we have drawn the action of
$\widetilde{\sigma_0}$ on $X$ in Figure \ref{fig:fm}, for $n >1$, the
maps $\widetilde{\sigma_n}$ are induced coordinate-wise from this map.

\begin{figure}[ht]
  \psfrag{s}{$\widetilde{\sigma_0}$}
  \psfrag{X}{$X$}
  \psfrag{R}{$\R$}
  \psfrag{f}{$f$}
  \psfrag{F}{$\Fix(\widetilde{\sigma_0})$}
  \psfrag{0}{$0$}
  \centering
  \includegraphics{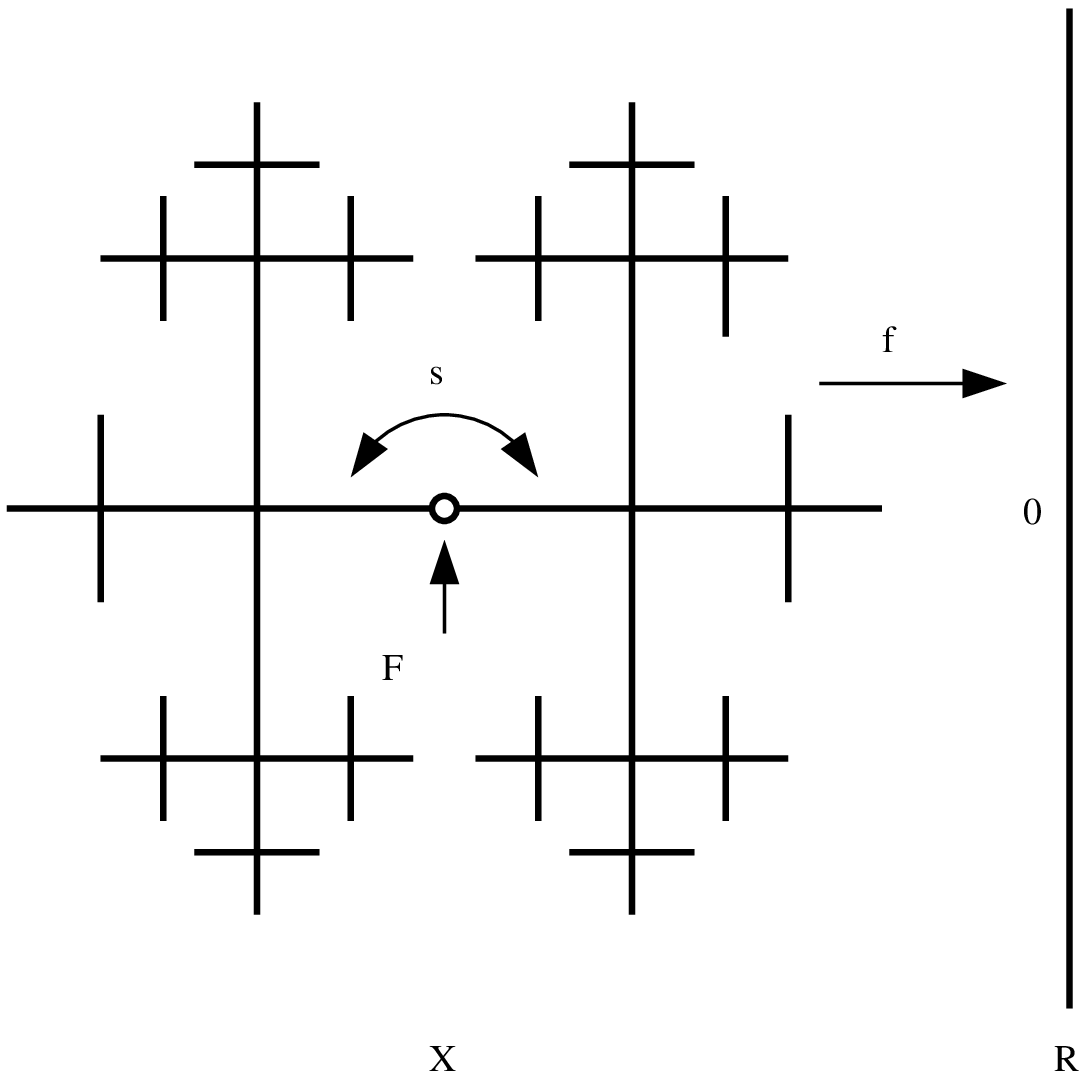}
  \caption{The isometry $\widetilde{\sigma_0}\co X \to X$.}
  \label{fig:fm}
\end{figure}

Let $G = H \rtimes \langle \widetilde{\sigma_0} \rangle$, this is the
group considered by Feighn and Mess.  Then there is a homomorphism
$\varphi \co G \to \Z$ induced from $h_*\co H \to \Z$ and by sending
$\widetilde{\sigma_0} \mapsto 0$.  The map $f\co X \to \R$ is
$\varphi$-equivariant.  This is our model situation where Proposition
\ref{prop:conj} applies, hence the conjugacy class of
$\widetilde{\sigma_0}$ in $G$ intersects $K=\ker(\varphi)$ in
infinitely many $K$-conjugacy classes.

To compute the finiteness properties of $K$ we use Morse theory with
\emph{horizontal} cells.  The map $f\co X \to \R$ is a
$h_*$-equivariant Morse function in the sense of \cite{BrMi}.  Since
finiteness properties are virtual notions, it suffices to compute the
finiteness properties of $\ker(h_*) = H \cap K$.  Any horizontal cell
for $\widetilde{\sigma_0}$ is a face of a cube of the form $b_1 \times
\cdots \times b_n$.  The ascending or descending link of an
$i$-dimensional face is $(n-i-1)$-connected as it is an
$(n-i)$-simplex, where we make the convention that $(-1)$-connected
means ``not empty'' and a $(-1)$-simplex is the empty set.
Therefore, $\ker(h_*)$ is of type F$_{n-1}$.  Furthermore, since the
ascending link of a horizontal cell of the form $b_1 \times \cdots
\times b_n$ is empty we see that $\ker(h_*)$ is not of type F$_n$.
Therefore $K$ is of type F$_{n-1}$ but not of type F$_n$.
  

\section{Examples arising from Rips' construction}\label{sec:rips}

The idea behind following examples was suggested by Dani
Wise. Consider the Rips' construction of a non-elementary hyperbolic
group $G_0$ with $\Z$ quotient and finitely generated kernel. Wise's
suggestion is to take a quotient of $G_0$ by a power of some element
$a_1$ of this kernel. One expects to get a new short exact sequence
$$
1 \; \to \; K \; \to \; G \; \to \; \Z \; \to \; 1
$$
where $G = G_0/\langle\!\langle a_1^k \rangle\!\rangle$ is hyperbolic,
$K$ is finitely generated but not finitely presentable, and $K$ has
infinitely many conjugacy classes of elements of order $k$.  We show
that this is the case in Theorem~\ref{thm:wise} below.

We work with Wise's CAT($-1$) version of the Rips' construction, and
use handle cancellation techniques to see that the finiteness
properties of the kernel subgroups follow from Morse theory.

A key component of Wise's version of Rips' construction is the long
word with no two-letter repetitions. We work with a slight variant of
Wise's construction in our definition of the non-elementary hyperbolic
group $G_0$.
 
\begin{definition}[Wise's long word  and the group $G_0$] 
  Given the set of letters $\{a_2, \ldots, a_m\}$, define
  $\Sigma(a_2,\ldots, a_m)$ to be the following word
$$
\; (a_2a_2 a_3 a_2 a_4 \cdots a_2 a_m)(a_3 a_3 a_4 a_3 a_5 \cdots a_3
a_m) \cdots (a_{m-1}a_{m-1} a_m)a_m.
$$ 

It is easy to see that $\Sigma(a_2,\ldots, a_m)$ is a positive word of
length $(m-1)^2$, with no two letter subword repetitions.  For $m
\geq 30$ we can, starting at the left hand side, partition
$\Sigma(a_2,\ldots, a_m)$ into at least $2m$ disjoint subwords;
$2(m-1)$ of length $14$, and two of length $13$. Construct the
positive word $W_1$ (respectively $V_1$) by adding $a_1$ as a prefix
to one of the length $13$ subwords (respectively as a suffix to the
other length $13$ subword). Call the remaining $2(m-1)$ length $14$
subwords $W_j$ and $V_j$ for $j \geq 2$.

Now define $G_0$ by  the presentation
\begin{equation}\label{eq:G0}
  G_0 \; =  \; \langle \, a_1, \ldots, a_m, t \; \, | \;  ta_jt^{-1} = W_j\, ,  \; t^{-1}a_jt = V_j \; \, ; \; 1 \leq j \leq m \, \rangle
\end{equation}
where  $\{W_j, V_j \}$ are given above.
\end{definition}

Now define the group 
\begin{equation}\label{eq:G1}
G \; = \;  G_0 /\langle \! \langle a_1^k\rangle \! \rangle
\end{equation}
where $G_0$ is defined in equation~(\ref{eq:G0}), and $k \geq 5$. 

\begin{remark}\label{rem:a1}
  Note that the group $G_0$ surjects to $\Z$, taking $t$ and $a_j$ ($j
  \geq 2$) to the identity, and $a_1$ to a generator of $\Z$.  Thus,
  the group $G$ maps onto $\Z_k$ taking $a_1$ to a generator, and all
  the other generators to the identity. This implies that the element
  $a_1$ has order $k$ in $G$.
\end{remark}

\begin{theorem}\label{thm:wise}
  The group $G$ defined by~(\ref{eq:G1}) is CAT(-1).  Let $K$ denote
  the kernel of the map $G \to \Z$ which takes each $a_j$ to the
  identity, and $t$ to a generator of $\Z$.  Then $K$ is finitely
  generated but not finitely presentable. Furthermore, the conjugacy
  class of the element $a_1$ in $G$ intersects $K$ in infinitely many
  conjugacy classes.
\end{theorem}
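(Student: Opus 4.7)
The proof splits naturally into three parts matching the three assertions, and I would tackle them in the order in which the theorem states them.

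For the CAT($-1$) claim, I would build the presentation 2-complex $Y$ for $G$ with a single vertex, one edge per generator, one 2-cell for each Rips relation, and one additional 2-cell for the relator $a_1^k$, metrizing each 2-cell as a regular hyperbolic polygon. The Rips 2-cells admit a CAT($-1$) metric by Wise's original argument: the key combinatorial input is that $\Sigma(a_2,\ldots,a_m)$ has no two-letter subword repetitions, which forces any piece appearing on the boundary of a Rips relator to have combinatorial length at most one. The new $k$-gon for $a_1^k$ can be made a regular hyperbolic polygon with arbitrarily small interior angles because $k \geq 5$, while it contributes only the letter $a_1$ (and its inverse) to any piece. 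I would then check that the link of the unique vertex of $Y$ has systole at least $2\pi$, so that $Y$ is locally CAT($-1$), hence $G = \pi_1(Y)$ is a CAT($-1$) group.

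For the infinite conjugacy class claim I would apply Proposition~\ref{prop:conj} through the model situation of Section~\ref{sec:conj}. Let $X$ denote the CAT($-1$) universal cover of $Y$, on which $G$ acts properly and cocompactly. Define $f\co X \to \R$ by lifting the circle-valued map that sends every $a_j$-edge to a basepoint of $\R/\Z$ and the $t$-edge once linearly around; this is $\varphi$-equivariant and is a Morse function in the sense of \cite{BrMi}. By Remark~\ref{rem:a1}, $a_1$ has order exactly $k$, so it acts on $X$ as an elliptic isometry. The center of the $a_1^k$-disk in $Y$ lifts to a point of $X$ that is fixed by $a_1$, and $\Fix(a_1)$ is a convex subcomplex. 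Because $X$ is two-dimensional, $G$ acts properly, and no other 2-cell of $Y$ has boundary word fixed setwise by $a_1$, I expect $\Fix(a_1)$ to be exactly this single point; in any case $f(\Fix(a_1))$ is clearly compact since the whole $a_1^k$-disk sits in a single level of $f$. The hypotheses of the model situation are then satisfied, so the conjugacy class of $a_1$ in $G$ meets $K = \ker(\varphi)$ in infinitely many $K$-conjugacy classes.

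The finiteness properties of $K$ are the main obstacle. For finite generation I would analyze the ascending and descending vertex links of $f$: the Rips relations $ta_jt^{-1}=W_j$ and $t^{-1}a_jt=V_j$ are designed so that each $a_j^{\pm 1}$ is joined both upward and downward to walks in the $a$-generators, and a standard check then shows both links are connected, hence $K$ is finitely generated. For non-finite presentability I would invoke the Morse theory with \emph{horizontal} cells used in Section~\ref{sec:fm}: the 2-cell for $a_1^k$ is horizontal since its boundary lies in $\ker\varphi$, and must be treated as a horizontal 2-cell rather than absorbed into vertex links. I expect the ascending link, viewed as a 1-complex together with the horizontal handle 2-cells coming from the $a_1^k$-relator, to contain an essential loop coming from the cyclic structure of $a_1^k$ that cannot be filled by handles from the Rips relations. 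Making the handle cancellations explicit enough to exhibit such a surviving essential loop, and thereby certify that the ascending link is not simply connected, is the delicate bookkeeping step and where I expect the real work to lie.
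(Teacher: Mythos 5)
Your Step 1 cannot work as written: by Remark~\ref{rem:a1} the element $a_1$ has order $k$ in $G$, so $G$ has torsion and therefore cannot be the fundamental group of any locally CAT($-1$) (hence aspherical) complex. Concretely, in the link of the unique vertex of your metrized presentation complex the $2$-cell with boundary $a_1^k$ contributes $k$ corner edges all joining $a_1^{+}$ to $a_1^{-}$, creating bigons whose angle sum is less than $2\pi$, so the large-link condition fails no matter how you choose the hyperbolic polygons. The paper's route is different: pass to the universal cover $\widetilde Y$ of the presentation complex, note that over each embedded loop $a_1^k$ the preimage of the new $2$-cell is a family of $k$ cells attached along the same circle, and collapse each such family to a single regular hyperbolic $k$-gon. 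The resulting complex $X$ is locally CAT($-1$) (each vertex link now has a single edge from $a_1^{+}$ to $a_1^{-}$ of length at least $\pi/2$), and $G$ acts on $X$ properly and cocompactly but \emph{not} freely. This also repairs your Step 2: on a genuine universal cover $a_1$ would act as a deck transformation, hence freely, and would have no fixed point at all; the fixed point fed into the model situation is the center of the collapsed $k$-gon in $X$, which exists precisely because the action is not free.

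The finiteness-property part is also misdirected. The ascending link of a vertex is \emph{not} connected: after the triangular subdivision of the relator cells it has $13m+1$ components (each contractible), so ``a standard check shows both links are connected, hence $K$ is finitely generated'' is false, and finite generation is exactly what forces the handle-cancellation analysis (Definition~\ref{def:canc} and Lemma~\ref{lem:canc}): the $1$-handles produced by the many components of the ascending link are cancelled against a designated subset of the $2$-handles, so that up to homotopy only $2$-handles are attached when passing from $f^{-1}([a+1,b])$ to $f^{-1}([a,b])$ (and similarly on the descending side). For the same reason there is no ``essential loop in the ascending link'' to exhibit--its components are cones--and the horizontal $k$-gons enter only as ordinary $2$-handle attachments. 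Non-finite-presentability is obtained instead as follows: since only $2$-handles are ever attached, the maps $H_1(f^{-1}([-n,n]))\to H_1(f^{-1}([-m,m]))$ are always surjective, and they can never be zero, because a zero map would give $H_1(f^{-1}([-m,m]))=0$ and the next layer of $2$-handles would then create nontrivial $2$-cycles inside the contractible $2$-complex $X$; Brown's criterion then shows $K$ is not FP$_2$. So the delicate step you anticipate is real, but the mechanism is handle cancellation plus a homological stability argument, not non-simple-connectivity of the ascending link.
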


\begin{proof} The proof is presented in several steps. First, we
  establish that the groups $G_0$ and $G$ are CAT(-1). Then we define
  the $G$-equivariant Morse function, and show how the conjugacy
  result follows from Section~\ref{sec:conj}. Finally, we use the
  Morse theory and an analysis of handle cancellations to establish
  the finiteness properties of $K$.

\medskip
\noindent
\textit{Step 1. The ${\rm CAT}(-1)$ structure for $G_0$.} 
First one sees that the group $G_0$ is CAT($-1$), by subdividing each
relator $2$-cell in its presentation $2$-complex into $5$
right-angled hyperbolic pentagons as shown in Figure~\ref{fig:W}.  The
presentation $2$-complex of $G_0$ satisfies the large link condition.
This is a consequence of the fact that the $V_j$'s and the $W_j$'s are
positive words with no two-letter repetitions. The argument is
identical to that of \cite{Wise}. We highlight the key points for the
reader's convenience.

\begin{figure}[ht]
  \centering
  \psfrag{t}{$t$}
  \psfrag{a}{$a_j$}
  \psfrag{W}{$W_j$}
  \psfrag{V}{$V_j$}
  \includegraphics{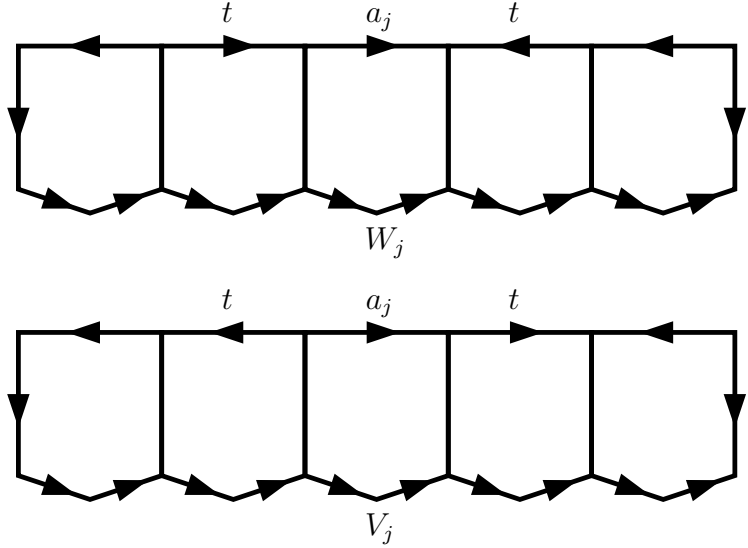}
  \caption{The subdivision of the 2-cells into right-angled pentagons.}
  \label{fig:W}
\end{figure}

The link is obtained from a subgraph on the $\{a_i^{\pm}\}$ by adding
the vertices $t^{\pm}$ and new edges of length $\pi$ connecting the
$t^{\pm}$ to the $a_i^{\pm}$. This is large if and only if the
subgraph on the $a_i^{\pm}$ is large.  The latter graph is large
because it is bipartite (since the $V_i$ and the $W_i$ are all
positive words), has no bigons (since the collection of all $V_i$ and
$W_i$ have no two letter repetitions by design) and the edge lengths
are all at least $\pi/2$.

\medskip
\noindent
\textit{Step 2. The ${\rm CAT}(-1)$ structure for $G$.}  
Next, we show that $G = G_0/ \langle \! \langle a_1^k\rangle \!
\rangle$ is a CAT($-1$) group for $k \geq 5$. First attach a $2$-cell
$\Delta$ to the loop $a_1^k$ in the locally CAT($-1$) presentation
$2$-complex for $G_0$.  The resulting complex $Y$ is a presentation
$2$-complex for $G$.

By Remark~\ref{rem:a1}, the preimage of the loop $a_1$ in the
universal cover $\widetilde Y$ of $Y$ consists of a disjoint
collection of embedded circles of length $k$ (of the form
$a_1^k$). The preimage of $\Delta$ in $\widetilde Y$ consists of
distinct families of $k$ $2$-cells, one for each component of the
preimage of $a_1$.  Each of the $k$ $2$-cells in a family is attached
to the same embedded loop $a_1^k$.

For each loop labeled by $a_1^k$ in $\widetilde Y$ collapse the $k$
attached $2$-cells in the preimage of $\Delta$ down to a single
$2$-cell. The resulting cell complex $X$ can be given a locally
CAT($-1$) structure, by giving each $2$-cell in the preimage of
$\Delta$ the geometry of a regular hyperbolic $k$-gon whose side
lengths are equal to the side length of a right-angled hyperbolic
pentagon.  The proof that $X$ is locally CAT($-1$) involves a slight
modification of the argument given above to show that $G_0$ is
CAT($-1$).  In particular, we add a single edge from $a_1^{+}$ to
$a_1^{-}$ of length at least $\pi/2$, and note that the subgraph on
the $\{ a_i^{\pm} \}$ is still bipartite with no bigons.

Since $X$ is $1$-connected and locally CAT($-1$), it is CAT($-1$),
and since $G$ acts properly discontinuously and co-compactly on $X$,
we conclude that $G$ is a CAT($-1$) group. Hence $G$ is hyperbolic.

\begin{figure}[ht]
  \psfrag{ai}{$a_i$}
  \psfrag{G}{$\Gamma_i$}
  \psfrag{t}{$t$}
  \psfrag{as1}{$a_{\sigma_i(1)}$}
  \psfrag{as2}{$a_{\sigma_i(2)}$}
  \psfrag{as14}{$a_{\sigma_i(14)}$}
  \psfrag{at1}{$a_{\tau_i(1)}$}
  \psfrag{at2}{$a_{\tau_i(2)}$}
  \psfrag{at14}{$a_{\tau_i(14)}$}
  \psfrag{r1}{$r_{i,1}$}
  \psfrag{r2}{$r_{i,2}$}
  \psfrag{r13}{$r_{i,13}$}
  \psfrag{r14}{$r_{i,14}$}
  \psfrag{s1}{$s_{i,1}$}
  \psfrag{s2}{$s_{i,2}$}
  \psfrag{s3}{$s_{i,3}$}
  \psfrag{s14}{$s_{i,14}$}
  \psfrag{d1}{$\Delta_{i,1}$}
  \psfrag{d2}{$\Delta_{i,2}$}
  \psfrag{d14}{$\Delta_{i,14}$}
  \psfrag{c}{$\cdots$}
  \centering
  \includegraphics{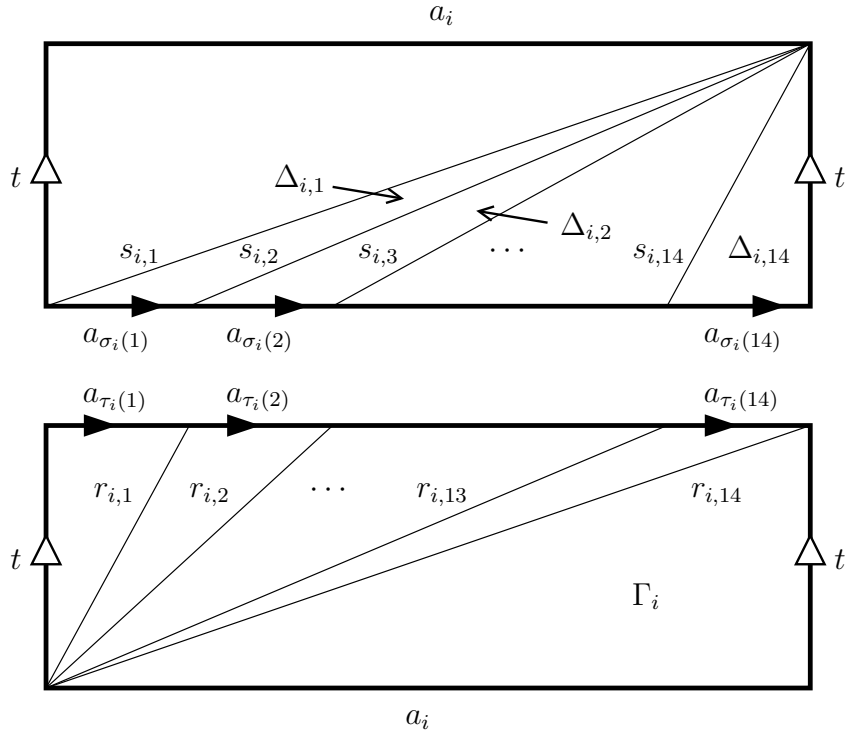}
  \caption{The subdivided 2-cells in $X$.}\label{fig:cells}
\end{figure}
 
\medskip
\noindent
{\em Step 3. The Morse function and counting conjugacy classes.}  
We define a circle valued Morse function on the original
(unsubdivided) presentation $2$-complex for $G$ as follows.  Send the
$t$ edge homeomorphically around the circle, and send the remaining
edges in the $1$-skeleton (namely the $a_j$) to the base-point of the
target circle. Extend linearly over the $2$-cells. Thus the $2$-cell
$\Delta$ is mapped to the base point of the target circle, and the
remaining $2$-cells are mapped onto the $t$ edge (horizontal
projection in Figure~\ref{fig:cells}) and then to the circle as
described above.

This lifts to a $G$-equivariant Morse function $f\co\widetilde{Y} \to
\R$, which factors through $X$. We denote the $G$-equivariant factor
map $X \to \R$ by $f$. Note that the $1$-cells labeled by $a_j$ are
all horizontal, and that the $2$-cells of $X$ in the preimage of
$\Delta$ are also horizontal.

The element $a_1$ fixes a unique point of $X$: namely, the center of
the $k$-gon with vertices $1, a_1, \ldots, a_1^{k-1}$. We are now in
the model situation of Section~\ref{sec:conj} above, and so the
conjugacy class of $a_1$ in $G$ intersects $K$ in infinitely many
$K$-conjugacy classes.

\medskip
\noindent
{\em Step 4. Finiteness properties of the kernel $K$.}  
Finally, we use Morse theory on the complex $X$ to show that $K$ is
finitely generated but not finitely presentable.

To this end, we first subdivide the $2$-cells $ta_it^{-1}W_i^{-1}$
and $t^{-1}a_itV_i^{-1}$ into triangular cells. This subdivision is
indicated in Figure~\ref{fig:cells}.  Writing $W_i =
a_{\sigma_i(1)}\ldots a_{\sigma_i(14)}$, define new edges $s_{i,j}$
for $1 \leq j \leq 14$ inductively by
$$
ta_i = s_{i,1} \quad \quad {\rm{and}} \quad \quad  s_{i,j} = a_{\sigma_i(j)}s_{i,j+1} \quad \quad 
1\leq j \leq 13. 
$$ 
For each $1 \leq i \leq m$ and each $1 \leq j \leq 14$, let
$\Delta_{i,j}$ denote the triangular $2$-cell in the subdivision of
$ta_it^{-1}W_i^{-1}$ which has boundary
$a_{\sigma_i(j)}s_{i,j+1}s_{i,j}^{-1}$.

Similarly, writing $V_i = a_{\tau_i(1)}\ldots a_{\tau_i(14)}$, define
new edges $r_{i,j}$ for $1 \leq j \leq 14$ inductively by
$$
r_{i,1} = ta_{\tau_i(1)} \quad \quad {\rm{and}} \quad \quad r_{i,j+1} = r_{i,j}a_{\tau_i(j+1)} 
\quad \quad 1\leq j \leq 13. 
$$
For each $1 \leq i \leq m$, let $\Gamma_i$ denote the triangular
$2$-cell in the subdivision of $t^{-1}a_itV_i^{-1}$ with boundary
$a_i t r_{i,14}^{-1}$.

We are in the setting of \cite{BrMi}, where $f$ has horizontal
$0$-cells, $1$-cells and $2$-cells.  The Morse theory argument will
be essentially that of \cite{BrMi}, but we will have to take care of
handle cancellations. Note that for any cell of $X$, its image under
$f$ will either be an integer or an interval of length one (between
successive integers) in $\R$.

\medskip
\noindent {\em Ascending links.} A schematic of the ascending link of
a vertex $v \in X$ is given in Figure~\ref{fig:ascendinglink}. It
consists of a graph with $(13m + 1)$ components.  One component is a
graph which is a subdivision of the cone on the $2m$ points $a_i^{-}$
and $s_{i,1}$ for $1 \leq i \leq m$. The cone vertex is $t^{-}$, and
each of the $m$ edges from $t^{-}$ to $a_i$ ($1 \leq i \leq m$) is
subdivided into $14$ segments by $r_{i,j}^{-}$ for $1 \leq j \leq 14$.
The remaining $13m$ points are labeled by $s_{i,j}^{-}$ ($1\leq i \leq
m$ and $2 \leq j \leq 14$).

\begin{figure}[ht]
  \centering
  \includegraphics{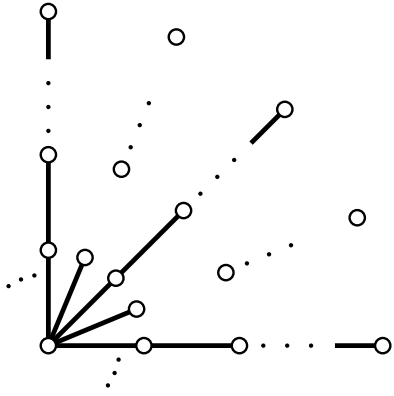}
  \caption{The ascending link of $v$.}
  \label{fig:ascendinglink}
\end{figure}

\medskip
\noindent {\em The handle additions.}  For integers $a < b$ we obtain
$f^{-1}([a,b])$ from $f^{-1}([a+1, b])$ by a succession of three types
of coning operations (equivalently, handle addition operations).
 
 \begin{enumerate}
 \item For each $0$-cell $v \in f^{-1}(a)$, let $U_v$ denote the
   union of all the cells of $X$, each of which contains $v$ and which
   maps to the interval $[a,a+1]$ under $f$. Then $U_v \cap
   f^{-1}(a+1)$ is a geometric realization of $Lk_\uparrow(v, X)$.
 
   Attaching the $U_v$ to $f^{-1}([a+1,b])$ for each $0$-cell $v \in
   f^{-1}(a)$ is equivalent to coning off each $U_v \cap f^{-1}(a+1)$
   copy of $Lk_\uparrow(v, X)$ to the corresponding vertex $v$. Denote
   the resulting complex by $X_1$.

   Now $Lk_\uparrow (v, X)$ has $(13m + 1)$-components, each of which
   are contractible. Thus coning the copy of $Lk_\uparrow (v, X)$ in
   $f^{-1}(a+1)$ off to $v$ is equivalent to attaching a wedge of
   $13m$ one-handles to $f^{-1}([a+1,b])$.  Thus $X_1$ is obtained
   from $f^{-1}([a+1,b])$ by attaching an infinite family of distinct
   wedges of $13m$ one-handles, indexed by the $0$-cells of
   $f^{-1}(a)$.
  
 \item For each $1$-cell $e$ in $f^{-1}(a)$, let $U_e$ denote the
   union of cells, each of which contains $e$ and which maps to
   $[a,a+1]$ under $f$.  Then $U_e \cap f^{-1}(a+1)$ is a geometric
   realization of $Lk_\uparrow(e, X)$. This is a discrete set of
   points.  We see that its cardinality is at least one, by writing $e
   = a_i$ for some $1 \leq i \leq m$, and noting that $Lk(a_i,
   \Gamma_i)$ is a subset of $Lk_\uparrow(e, X)$.  Recall that
   $\Gamma_i$ is the simplex containing the bottom edge $a_i$ in the
   subdivision of $t^{-1}a_itV_i^{-1}$ in Figure~\ref{fig:cells}.
 
   Note also that $U_e \cap X_1$ is isomorphic to $Lk_\uparrow(e, X)
   \ast \partial e$.  Thus, attaching $U_e$ to $X_1$ is equivalent to
   coning $Lk_\uparrow(e, X) \ast \partial e$ off to the barycenter
   $\widehat{e}$ of $e$. Since $\partial e$ has two points, this is
   equivalent to attaching a wedge of $(|Lk_\uparrow (e, X)| - 1)$
   two-handles to $X_1$. This even makes sense in the case that
   $Lk_\uparrow(e, X)$ has only one point. Then the coning operation
   is a homotopy equivalence, which is equivalent to attaching 0
   two-handles to $X_1$.
 
   Let $X_2$ denote the result of attaching $U_e$ to $X_1$ for each
   $2$-cell $e$ in $f^{-1}(a)$. As in the previous section, $X_2$ is
   obtained from $X_1$ by attaching an infinite family of wedges of
   two-handles to $X_1$, indexed by $1$-cells $e$ in $f^{-1}(a)$.
 
 \item Finally, attach the $2$-cells $d \subset f^{-1}(a)$ to $X_2$
   to obtain $f^{-1}([a,b])$.  Note that $d \cap X_2 = \partial d$ for
   each such $2$-cell, and so attaching $d$ is equivalent to coning
   off $\partial d$ to the barycenter $\widehat{d}$ of $d$.
  
 \end{enumerate}

\medskip
\noindent {\em The handle cancellations.} We show that a subset
(described in Definition~\ref{def:canc} below) of the two-handles
from the coning operation~(2) above {\em cancel} with the collection
of all one-handles from the coning operation~(1).  Note that $X_1$ is
the result of attaching all the one-handles to $f^{-1}([a+1,b])$.  By
{\em cancel} we simply mean that the space obtained from $X_1$ by
attaching this subset of two-handles is homotopy equivalent to
$f^{-1}([a+1,b])$.

\begin{definition}[Canceling set] \label{def:canc} For each $1 \leq
  \ell \leq m$ and for each horizontal edge $a_\ell \subset
  f^{-1}(a)$, consider the union $U_{a_\ell}$ of $2$-cells of $X$,
  each of which contains $a_\ell$ and which maps to $[a,a+1]$ under
  $f$. Let $U'_{a_\ell}$ denote the subcomplex of $U_{a_\ell}$ such
  that $Lk(a_\ell, U'_{a_\ell})$ is isomorphic to
$$
   Lk (a_\ell, \Gamma_\ell) \; \cup \; \bigcup_{\sigma_i(j)=\ell,\,  j\not=1} 
Lk (a_{\sigma_i(j)}, \Delta_{i,j}) . 
$$
That is, we are not considering contributions to $Lk_\uparrow(a_\ell,
X)$ which correspond to occurrences of $a_\ell$ as the first letter in
any $W_i$.  The canceling set is defined as the following subcomplex
of $f^{-1}([a,a+1])$
$$
U \; = \; \bigcup_{1\leq \ell \leq m, \, a_\ell \subset f^{-1}(a)} U'_{a_\ell} . 
$$
The shaded portion of Figure~\ref{fig:retraction} shows the
intersection of a $2$-cell $ta_it^{-1}W_i^{-1}$ in $f^{-1}([a,a+1])$
with $X_1 \cup U$.
\end{definition}

\begin{lemma}[Handle cancelation for ascending links] \label{lem:canc}
  Let $f: X \to \R$ be as defined above.  Given integers $a < b$, let
  $X_1$ be the space obtained from $f^{-1}([a+1,b])$ by attaching
  $1$-handles as described in coning operation (1) above, and let $U$
  be the canceling set of Definition~\ref{def:canc}.
 
 Then $X_1 \cup U$ is homotopy equivalent to $f^{-1}([a+1,b])$. 
 \end{lemma}

 \begin{proof} The homotopy equivalence is fairly easy to see. First,
   consider relator cells of the form $ta_it^{-1}W_i^{-1} \subset
   f^{-1}([a,a+1])$.  Figure~\ref{fig:retraction} shows the
   intersection of $X_1 \cup U$ with one of these relators.  The only
   cell of this relator which does not belong to $X_1 \cup U$ is the
   unshaded (open) $2$-cell labeled $\Delta_{i,1}$. There is an
   obvious deformation retraction of this intersection onto the
   boundary $1$-skeleton $ta_it^{-1}W_i^{-1}$. Perform all these
   deformation retractions for each $a_i\subset f^{-1}(a+1)$ and each
   $1 \leq i \leq m$, to get the space $Z$.

\begin{figure}[ht]
  \psfrag{ai}{$a_i$}
  \psfrag{as1}{$a_{\sigma_i(1)}$}
  \psfrag{as2}{$a_{\sigma_i(2)}$}
  \psfrag{as3}{$a_{\sigma_i(3)}$}
  \psfrag{as14}{$a_{\sigma_i(14)}$}
  \psfrag{t}{$t$}
  \psfrag{c}{$\cdots$}
  \centering
  \includegraphics{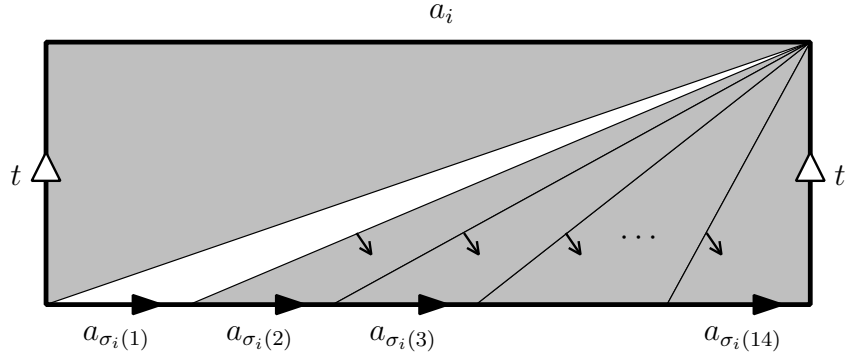}
  \caption{Step one of the deformation retraction in Lemma~\ref{lem:canc}.}
  \label{fig:retraction}
\end{figure}

Now, we turn our focus to horizontal $1$-cells at height $a$ in the
set $Z$.  Each $a_j \subset f^{-1}(a)$ is contained in just one
$2$-cell of $Z$. This $2$-cell is labeled $\Gamma_j$. Push across
the free edge $a_j$ to deformation retract the relator $t^{-1}V_j
ta_j^{-1}$ onto the subword $t^{-1}V_j t$ of its boundary word. Do
this equivariantly for all $1 \leq j \leq m$ and all $a_j \subset
f^{-1}(a)$.  The resulting space can now be deformed to
$f^{-1}([a+1,b])$ by collapsing the edges labeled $t$ in
$f^{-1}([a,a+1])$ onto their vertices at level $a+1$.
\end{proof}
 
 \medskip
 
 \noindent
 {\em Continue with the usual Morse argument.}  We obtain
 $f^{-1}([a,b])$ from $f^{-1}([a+1,b])$ by first attaching all the new
 cells of $X_1 \cup U$. Lemma~\ref{lem:canc} ensures that $X_1 \cup U$
 is homotopy equivalent to $f^{-1}([a+1,b])$. Now we obtain
 $f^{-1}([a,b])$ from $X_1 \cup U$ by attaching all $2$-cells of
 $f^{-1}([a,a+1])$ which are labeled by $\Delta_{i,1}$ for $1 \leq i
 \leq m$, and by attaching all $2$-cells of $f^{-1}(a)$. The boundary
 of each of these $2$-cells is contained in $X_1 \cup U$, and so each
 such attachment can be viewed as a $2$-handle attachment (coning
 boundary off to barycenter).

 In a similar fashion (working with descending links) one can argue
 that $f^{-1}([a,b+1])$ is obtained from $f^{-1}([a,b])$ up to
 homotopy by only attaching $2$-handles.

 Now, the usual Morse theory arguments of \cite{BeBr} apply to
 conclude that the level set $f^{-1}(0)$ is connected, and hence that
 $K$ is finitely generated.  Since we are attaching two-handles, the
 inclusion-induced map $H_1(f^{-1}([-n,n])) \to H_1(f^{-1}([-m,m]))$
 for any integers $0 < n < m$ is always a surjection. If this
 inclusion-induced homomorphism were ever the zero homomorphism, then
 we would have $H_1(f^{-1}([-m,m])) = 0$.  The two-handles attached
 to obtain $f^{-1}([-m-1, m+1])$ would produce nontrivial $2$-cycles
 in $f^{-1}([-m-1, m+1]) \subset X$ which contradicts the fact that
 $X$ is a contractible $2$-complex. Now Theorem~2.2 of \cite{brown}
 (taking $\alpha \in {\mathbb N}$ and taking $X_\alpha$ to be
 $f^{-1}([-\alpha, \alpha])$) implies that $K$ is not FP$_2$. In
 particular, $K$ is not finitely presented.
\end{proof}


\section{Conjugacy classes in finitely presented subgroups of
  hyperbolic groups}
\label{sec:hyper}

In this section we construct a hyperbolic group with a finitely
presented subgroup which has infinitely many conjugacy classes of
finite-order elements.  Our construction is a modification of the
construction in \cite{Br} of a hyperbolic group with a finitely
presented subgroup which is not hyperbolic.  The hyperbolic group in
\cite{Br} is torsion-free, and does not admit any obvious finite-order
automorphisms.

\begin{theorem}\label{th:fp-hyp}
  There exist hyperbolic groups containing finitely presented
  subgroups which have infinitely many conjugacy classes of finite
  order elements.
\end{theorem}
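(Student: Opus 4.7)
The plan is to adapt the construction of \cite{Br} to admit a finite-order symmetry, so as to put us into the Model Situation of Section~\ref{sec:conj}. Recall that \cite{Br} produces a hyperbolic (in fact locally CAT($-1$)) group $G_0$ acting properly and cocompactly on a $2$-complex $X_0$, together with an epimorphism $\varphi_0\co G_0 \to \Z$, a $\varphi_0$-equivariant Morse function $f_0\co X_0 \to \R$, and a finitely presented (but not hyperbolic) kernel $K_0 = \ker(\varphi_0)$. The difficulty flagged in the paragraph preceding the theorem is that $G_0$ is torsion-free and has no obvious finite-order automorphisms, so a suitable finite-order element $\sigma$ must be introduced by a geometric modification rather than by simply forming a semidirect product with an existing automorphism.

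I would carry out the modification in three steps. First, replace $X_0$ by a related complex $X$ which carries a cellular isometric $\Z/n$-action $\sigma$ whose fixed-point set projects to a single point under a natural extension $f\co X \to \R$ of $f_0$. The most transparent construction is an equivariant ``doubling'' or $n$-fold branched cover of $X_0$ along a chosen horizontal fiber $f_0^{-1}(0)$, with $\sigma$ cyclically permuting the sheets and fixing the branching fiber pointwise. Let $G$ be the group of cellular isometries of $X$ generated by a lift of $G_0$ and $\sigma$, and extend $\varphi_0$ to $\varphi\co G \to \Z$ by $\varphi(\sigma) = 0$; by construction $f$ is $\varphi$-equivariant. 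Second, verify that $X$ is still locally CAT($-1$) by checking the link condition at each vertex, in particular along the branching fiber where several copies of a link in $X_0$ meet; this part is analogous to Step~2 of the proof of Theorem~\ref{thm:wise}. Hyperbolicity of $G$ then follows from the proper cocompact action on $X$. Third, apply Proposition~\ref{prop:conj}: since $f(\Fix(\sigma))$ is a single point and hence compact, the Model Situation gives $\varphi(\Cent_G(\sigma)) = 0$, which has infinite index in $\Cent_\Z(\varphi(\sigma)) = \Z$, so the conjugacy class of $\sigma$ in $G$ meets $K := \ker(\varphi)$ in infinitely many $K$-conjugacy classes.

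For finite presentability of $K$, I would run the Bestvina--Brady Morse argument with respect to $f$: it suffices to show that the ascending and descending links of vertices of $X$ are simply connected. Away from the branching fiber these agree with the corresponding links in $X_0$, which were already simply connected by the choice of construction in \cite{Br}. At a vertex of the branching fiber the link decomposes as a wedge or join of $n$ copies of the corresponding link in $X_0$ along a common subgraph, so simple connectivity is preserved. Consequently $K$ is of type F$_2$, and in particular finitely presented.

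The main obstacle is the verification of the CAT($-1$) link condition after the symmetrizing modification; once this is in place all remaining assertions follow from the model situation and the standard Morse-theoretic toolkit. A secondary issue is choosing the branching fiber to be rich enough that simple connectivity of the new ascending and descending links is preserved, so that the Morse-theoretic finite presentability of $K$ is not lost by the modification.
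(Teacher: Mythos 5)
The key step of your proposal --- the ``symmetrizing modification'' --- does not work as described. The level set $f_0^{-1}(0)$ has codimension one, so there is no cyclic branched cover ``branched over a fiber'': monodromy lives in the complement of the branch locus, and there is no ``going around'' a codimension-one subcomplex. What your description actually produces is $n$ copies of $X_0$ glued along one level set, and that object is not invariant under $G_0$ (only elements with $\varphi_0=0$ preserve $f_0^{-1}(0)$), so ``the group of isometries generated by a lift of $G_0$ and $\sigma$'' does not act properly cocompactly on $X$; making the gluing equivariant forces you to glue along every integer level set, which is a different (tree-of-spaces) construction whose hyperbolicity is exactly the problem: the fiber is quasi-isometric to $K_0=\ker\varphi_0$, which in \cite{Br} is finitely presented but \emph{not} hyperbolic, hence not quasiconvex, so amalgamating copies of the space over it is nothing like Step 2 of Theorem~\ref{thm:wise}, where the modification is a single compact $k$-gon attached along the short loop $a_1^k$ and the link check only adds one long edge. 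Your Morse-theoretic claim also fails on its own terms: a cell lying strictly above a branch-fiber vertex sits in a single sheet, and two sheets meet only in horizontal cells at height $0$, so the ascending link of such a vertex is $n$ \emph{disjoint} copies of the old ascending link --- disconnected, not simply connected --- and finite presentability of $K$ is then lost rather than preserved (compare Section~\ref{sec:fm}, where disconnected ascending links are precisely what kills the top finiteness property). A smaller inaccuracy: the complex in \cite{Br} is a three-dimensional non-positively curved cubical branched cover of $\Theta^3$, not a CAT($-1$) $2$-complex; its kernel is of type F$_2$ but not F$_3$, and hyperbolicity there comes from a no-flat-planes argument, so the ``CAT($-1$) link condition'' you defer to is not the available tool.

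The paper's route is different and avoids all of this: it does not alter the geometry of \cite{Br} at all, but builds the symmetry into the algebraic data defining the branched cover over the codimension-two locus $L$. The graph $\Theta$ has an involution $\sigma$ swapping $a\leftrightarrow b$ and $c\leftrightarrow d$, and the representation $\rho\co\pi_1(\Delta)\to S_5$ is chosen with $\alpha,\beta$ conjugate $4$-cycles so that $\rho\circ\iota\circ\sigma_{2\ast}=C_\alpha^2\circ\rho\circ\iota$; this shows $\sigma_{3\ast}$ preserves each $H_i=\rho_i^{-1}({\rm Stab}_{S_5}(1))$, so $\sigma_3$ lifts to the $125$-fold cover and extends to an order-two isometry $\eta$ of $Y$ fixing a vertex and acting freely on its link. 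Since this $\rho$ still satisfies Property~1 of Theorem~6.1 of \cite{Br}, hyperbolicity of $\pi_1(Y)$ and the F$_2$-not-F$_3$ property of $K$ are imported wholesale, the ambient group is the hyperbolic group $\pi_1(Y)\rtimes\la\eta\ra$, the subgroup is $K\rtimes\la\eta\ra$, and the conjugacy count is Proposition~\ref{prop:conj}. Your overall instinct --- introduce a finite-order isometry compatible with the Morse function and invoke the model situation --- is the right one, but the isometry must come from a symmetry of $\Theta$ respected by the covering data, not from a geometric doubling along a level set.
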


As indicated above, the proof consists in constructing a variation of
the branched cover complex in \cite{Br}. The variation will have an
added symmetry that the construction in \cite{Br} lacked. This
symmetry will enable one to extend the groups under consideration by a
finite-order automorphism, and to apply
Proposition~\ref{prop:conj}. An overview of the branched cover which
parallels the construction in \cite{Br} is provided in
subsection~\ref{subsec:cover} below, and the proof that this branched
cover does indeed admit a symmetry is given in
subsection~\ref{subsec:isometry}.


\subsection{\texorpdfstring{The branched cover $Y$}{The branched cover
    Y}} \label{subsec:cover}

Let $\Theta$ be the graph in Figure~\ref{fig:theta}, where each edge
is isometric to the unit interval.  Then $\Theta^3$, with the product
metric, is a piecewise Euclidean cubical complex of non-positive
curvature. The hyperbolic group in \cite{Br} is defined to be the
fundamental group of a particular branched cover $Y$ of $\Theta^3$
with branching locus
$$
L \; = \; (\Theta \times \{0\} \times \{1\}) \; \cup \; (\{1\} \times \Theta
\times \{0\}) \; \cup \; (\{0\} \times \{1\} \times \Theta)\, .
$$

\begin{figure}[ht]
  \psfrag{0}{$0$}
  \psfrag{1}{$1$}
  \psfrag{a}{$a$}
  \psfrag{b}{$b$}
  \psfrag{c}{$c$}
  \psfrag{d}{$d$}
  \centering
  \includegraphics[width=6cm]{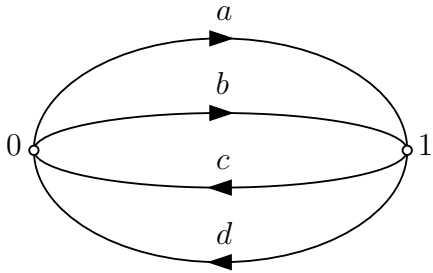}
  \caption{The graph $\Theta$.}\label{fig:theta}
\end{figure}

We refer to Section~5 of \cite{Br} for background on branched covers.
Recall from \cite{Br} that the branched cover $Y$ is obtained by the
following procedure.
\begin{itemize}
\item Remove the branching locus $L$ from the non-positively curved
  cubed complex $\Theta^3$. The resulting space $\Theta^3 - L$ has an
  incomplete piecewise Euclidean metric.
\item Take a finite cover of $\Theta^3 -L$.  The piecewise Euclidean
  metric on $\Theta^3 -L$ lifts to an incomplete piecewise Euclidean
  metric on this cover.
\item Complete this metric to form the branched cover $Y \to \Theta^3$. 
\end{itemize}
In Lemma 5.5 of \cite{Br} it is shown that if $X$ is a non-positively
curved piecewise Euclidean cubical complex and if $L \subset X$ is any
reasonable branching locus, then every finite branched cover of $X$
over $L$ is itself a non-positively curved piecewise Euclidean cubical
complex. In particular, every finite branched cover $Y$ of $\Theta^3$
over $L$ above is a non-positively curved piecewise Euclidean cubical
complex.

\smallskip

Now we define a Morse function on $\Theta^3$, and hence on finite
branched covers $Y$ of $\Theta^3$.  There is a map $\Theta \to S^1$
which maps the vertices $0$ and $1$ to a base vertex of $S^1$, and
maps the edges isometrically around the circle, with orientations
specified by the arrows in Figure~\ref{fig:theta}.  This defines a map
$\Theta^3 \to S^1\times S^1 \times S^1$. Composition with the standard
linear map $S^1\times S^1 \times S^1 \to S^1$ (the one covered by
$\R^3 \to \R : (x,y,z) \mapsto x+y+z$) gives a map $\Theta^3 \to
S^1$. Finally, the composition $Y \to \Theta^3 \to S^1$ is a
circle-valued Morse function on the branched cover $Y$.

The majority of the work in \cite{Br}
comes from constructing a specific finite branched cover $Y$ having
the following properties.
\begin{description}
\item[{\bf Hyperbolic}] $\pi_1(Y)$ is a hyperbolic group, and
\item[{\bf F$_2$--not--F$_3$}] the kernel of the map $\pi_1(Y) \to
  \pi_1(S^1)$ is finitely presented, but not of type F$_3$.
\end{description}
These properties are guaranteed by items (3) and (4) of Theorem~6.1 of
\cite{Br}.  The construction of the branched cover $Y$ is described in
detail in the proof of Theorem~6.1. We sketch the main points below,
and indicate our variation on that construction.  The key point is
that our variation is equivariant with respect to a particular
isometry $\sigma \co \Theta \to \Theta$ (see Section~\ref{subsec:isometry}) and so the branched cover $Y$ admits an
isometry $\eta$ induced by $\sigma$.

\begin{enumerate}

\item[(i)] For each $1 \leq i \leq 3$ there are projection maps
$$
{\rm pr}_i\co \Theta^3 \; \to \; \Theta^2 : (x_1, x_2, x_3) \;
\mapsto \; (x_{i+1}, x_{i+2})
$$ 
where $i$ is taken modulo $3$, and we use $3$ in place of $0$.

These restrict to projection maps
$$
{\rm pr}_i\co \Theta^3 - L \; \to \; \Theta^2 - \{(0,1)\}
$$
for $1 \leq i \leq 3$.

\item[(ii)] Let $\Delta$ be the graph (depicted in Figure~\ref{fig:delta})
$$\Theta \times \{0\} \; \cup \; \{1\} \times \Theta \subset 
\Theta^2 - \{(0,1)\}.$$

\begin{figure}[ht]
  \psfrag{(0,0)}{{\scriptsize $(0,0)$}} 
  \psfrag{(1,0)}{{\scriptsize $(1,0)$}}
  \psfrag{(1,1)}{{\scriptsize $(1,1)$}} 
  \psfrag{a0}{$a_0$} 
  \psfrag{b0}{$b_0$}
  \psfrag{c0}{$c_0$} 
  \psfrag{d0}{$d_0$} 
  \psfrag{a1}{$a_1$}
  \psfrag{b1}{$b_1$} 
  \psfrag{c1}{$c_1$} 
  \psfrag{d1}{$d_1$} 
  \centering
  \includegraphics[width=11.5cm]{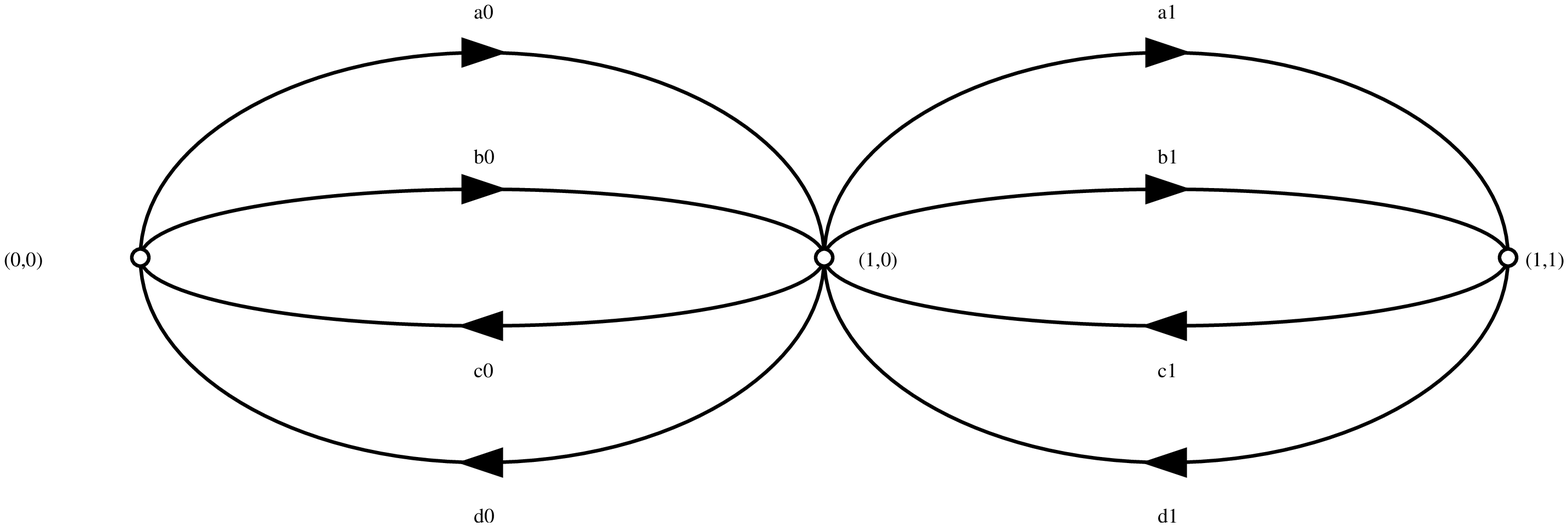}
  \caption{The graph $\Delta$.}\label{fig:delta}
\end{figure}

\noindent By Lemma~6.2 of \cite{Br}, the inclusion of $\Delta$ in
$\Theta^2 - \{(0,1)\}$ is a homotopy equivalence.  We choose the
following basis for $\pi_1(\Delta, (1,0))$, which is a free group of
rank $6$:
$$\bar{a}_0 b_0,\; \bar{a}_0\bar{c}_0,\; 
\bar{a}_0\bar{d}_0, \; a_1\bar{b}_1, \;a_1c_1,\; a_1d_1$$
 
\item[(iii)] Define a homomorphism $\rho\co \pi_1(\Delta, (0,1)) \to S_5$ (when
  convenient, we shall think of $\rho$ as a homomorphism $\rho\co
  \pi_1(\Theta^2 - \{(0,0)\},(0,1)) \to S_5$) by
  \begin{align*}
    \bar{a}_0b_0 &\mapsto \alpha^2 & a_1\bar{b}_1 & \mapsto
    \beta^2 \\
    \bar{a}_0\bar{c}_0 &\mapsto \alpha^3 & a_1c_1 & \mapsto
    \beta^3 \\
    \bar{a}_0\bar{d}_0 &\mapsto \alpha & a_1d_1 & \mapsto \beta
  \end{align*}
  where $\alpha$ and $\beta$ are the permutations
  \begin{equation*}
    \alpha = (1)(2\;5\;3\;4)\quad \text{and} \quad \beta = 
    (1\;2\;3\;4)(5) .
  \end{equation*}
  The reader can verify that $[\alpha^i,\beta^j]$ is a $5$-cycle for
  $1 \leq i,j  \leq 3$. This implies that the images under $\rho$ of the
  36 commutators obtained by taking a pair of loops in $\Delta$ (one
  composed of two $1$-cells with endpoints $(0,0)$ and $(1,0)$, and
  the second composed of two $1$-cells with endpoints $(1,0)$ and
  $(1,1)$) are all $5$-cycles.

  Thus the homomorphism $\rho$ satisfies Property~1 in the proof of
  Theorem~6.1 of \cite{Br}. See Remark~\ref{rk:property}.

\item[(iv)] For $1 \leq i \leq 3$ define homomorphisms $\rho_i\co
  \pi_1(\Theta^3 - L, (0,0,0)) \to S_5$ by $$\rho_i = \rho \circ \iota
  \circ {\rm{pr}}_i$$ where $\iota \co \pi_1(\Theta^2 -
  \{(0,0)\},(0,0)) \to \pi_1(\Theta^2 - \{(0,0) \},(1,0))$ is a change
  of basepoint isomorphism.  Specifically, $\iota$ is the isomorphism
  induced by conjugating an edge path in $\Delta$ based at $(0,0)$ by
  the edge $\bar{a}_0$.  For each $i$, this gives an action of
  $\pi_1(\Theta^3 - L, (0,0,0))$ on the set $S=\{1, 2,3,4,5\}$. Define
  an action of $\pi_1(\Theta^3 - L, (0,0,0))$ on the set $S \times S
  \times S$ via
$$
\tau = (\rho_1, \rho_2 , \rho_3)\co \pi_1(\Theta^3 - L,
(0,0,0) ) \to S_{125}.
$$  
The $125$-fold cover of $\Theta^3 - L$ associated to $\tau$ is
connected.  Lift the local metric from $\Theta^3 - L$ to $Y$, and
complete it to obtain the branched covering $b\co Y \to \Theta^3$.
\end{enumerate}

\begin{remark}\label{rk:property}
  Our version  of $\rho$ satisfies the key
  Property~1 of Theorem~6.1 of \cite{Br}. Thus items (3) and (4) of Theorem~6.1 hold, 
  and so the branched covering
$Y$ will satisfy properties {\bf Hyperbolic} and {\bf
  F$_2$--not--F$_3$} above.

However, our choice of $\rho$ has an extra symmetry built in that
  the one in \cite{Br} lacked. This symmetry will be crucial  in
  Section~\ref{subsec:isometry} below.
\end{remark}


\subsection{\texorpdfstring{The isometry $\eta\co Y \to Y$}{The
    isometry Y -> Y}} \label{subsec:isometry}

Let $\sigma\co \Theta \to \Theta$ denote the isometry which fixes
vertices $0$ and $1$ and transposes the edges $a$ and $b$, and the
edges $c$ and $d$. This induces coordinate-wise defined isometries
$$
\sigma_2\co \Theta^2 \; \to\; \Theta^2\; : (x,y) \; \mapsto \;
(\sigma(x), \sigma(y))
$$ 
and 
$$
\sigma_3\co \Theta^3 \; \to\; \Theta^3\; : (x,y,z) \; \mapsto \;
(\sigma(x), \sigma(y), \sigma(z))\, .
$$
These restrict to isometries of the incomplete spaces
$\sigma_2\co \Theta^2 - \{(0,1)\} \; \to \; \Theta^2 - \{(0,1)\}$ 
and 
$\sigma_3\co \Theta^3 - L \; \to\; \Theta^3 - L$.

Then $\sigma_2 \co \Theta^2 - \{(0,1)\} \; \to \; \Theta^2 -
\{(0,1)\}$ induces a map $\sigma_2 \co \Delta \to \Delta$.  The action
of $\sigma_2$ on $\Delta$ is given by $a_i \leftrightarrow b_i$, $c_i
\leftrightarrow d_i$, $i=0,1$.  Moreover, for $1\leq i \leq 3$, we
have
\begin{equation}\label{eq:projection} 
  {\rm pr}_{i\ast} \circ \sigma_{3\ast} = \sigma_{2\ast} \circ {\rm pr}_{i\ast}.
\end{equation}

Let $C_\alpha \co S_5 \to S_5$ be the inner automorphism  $x
\mapsto \alpha x \alpha^{-1}$. \\
{\bf Claim.} The following two  maps $\pi_1(\Theta^2 - \{(1,0)\},(0,0)) \to S_5$ are 
equal:
\begin{equation}\label{eq:sigma2}
  \rho \circ \iota \circ \sigma_{2\ast} = C_\alpha^2 \circ \rho \circ \iota
\end{equation}

Indeed, using the fact  that $\alpha,\beta$ are
$4$-cycles, we can check equality on a free basis as follows: 
\begin{align*}
  \rho\iota\sigma_{2\ast}(b_0\bar{a}_0) & = \rho\iota(a_0\bar{b}_0) =
  \rho(\bar{b}_0a_0) = \alpha^{-2} \\
  & = \alpha^2 = C_\alpha^2\rho(\bar{a}_0b_0) =
  C_\alpha^2\rho\iota(b_0\bar{a}_0) \\
  \rho\iota\sigma_{2\ast}(\bar{c}_0\bar{a}_0) & =
  \rho\iota(\bar{d}_0\bar{b}_0) = \rho(\bar{a}_0\bar{d}_0\bar{b}_0a_0)
  = \alpha^{-1} \\
  & = \alpha^3 = C_\alpha^2\rho(\bar{a}_0\bar{c}_0) =
  C_\alpha^2\rho\iota(\bar{c}_0\bar{a}_0) \\
  \rho\iota\sigma_{2\ast}(\bar{d}_0\bar{a}_0) & =
  \rho\iota(\bar{c}_0\bar{b}_0) = \rho(\bar{a}_0\bar{c}_0\bar{b}_0a_0)
  = \alpha \\
  & = C_\alpha^2\rho(\bar{a}_0\bar{d}_0) =
  C_\alpha^2\rho\iota(\bar{d}_0\bar{a}_0) \\
  \rho\iota\sigma_{2\ast}(a_0a_1\bar{b}_1\bar{a}_0) & =
  \rho\iota(b_0b_1\bar{a}_1\bar{b}_0) =
  \rho(\bar{a}_0b_0b_1\bar{a}_1\bar{b}_0a_0) =
  \alpha^2\beta^{-2}\alpha^{-2} \\
  & = \alpha^2\beta^2\alpha^{-2} = C_\alpha^2\rho(a_1\bar{b}_1) =
  C_\alpha^2\rho\iota(a_0a_1\bar{b}_1\bar{a}_0) \\
  \rho\iota\sigma_{2\ast}(a_0a_1c_1\bar{a}_0) & =
  \rho\iota(b_0b_1d_1\bar{b}_0) = \rho(\bar{a}_0b_0b_1d_1\bar{b}_0a_0)
  = \alpha^2\beta^{-1}\alpha^{-2} \\
  & = \alpha^2\beta^3\alpha^{-2} = C_\alpha^2\rho(a_1c_1) =
  C_\alpha^2\rho\iota(a_0a_1c_1\bar{a}_0) \\
  \rho\iota\sigma_{2\ast}(a_0a_1d_1\bar{a}_0) & =
  \rho\iota(b_0b_1c_1\bar{b}_0) = \rho(\bar{a}_0b_0b_1c_1\bar{b}_0a_0)
  = \alpha^2\beta\alpha^{-2} \\
  & = C_\alpha^2\rho(a_1d_1) = C_\alpha^2\rho\iota(a_0a_1d_1\bar{a}_0)
\end{align*}
and so the claim is established.

The symmetry  in equation (\ref{eq:sigma2}) is key to showing that  $\sigma_3$  lifts to an 
automorphism $\eta$ of $Y$.  First, equation (\ref{eq:sigma2})  is used to  show that 
$\sigma_3 \co \Theta^3 - L \to \Theta^3 - L$ lifts to a symmetry of the $125$-fold  
cover,  and then the isometry $\eta$ is obtained by continuous extension to the 
completion $Y$ of this cover. 

Recall from item (iv) of subsection~\ref{subsec:cover} that the $125$-fold cover of 
$\Theta^3 - L$ corresponds to the subgroup 
$$
H \; = \; \tau^{-1}({\rm Stab}_{S_{125}}((1,1,1)))
$$ 
of $\pi_1(\Theta^3 - L, (0,0,0))$. 
By construction,  $H=H_1 \cap
H_2 \cap H_3$, where
$$
H_i=\rho_i^{-1}(\rm{Stab}_{S_5}(1)), \qquad \text{for} \; \;  1 \leq i \leq 3.
$$
The isometry $\sigma_3$ lifts to this cover provided that $\sigma_{3\ast}(H) \subset H$. 
This will follow if $\sigma_{3\ast}(H_i) \subset H_i$ for $1 \leq i \leq 3$. 
If $h \in H_i$, (i.e. $\rho_i (h) \in \rm{Stab}_{S_5}(1)$) then
\begin{align*}
  \rho_i\sigma_{3\ast} (h)(1) &= \rho \iota {\rm
    pr}_{i\ast}\sigma_{3\ast} (h)(1) &
  \tag*{\scriptsize definition of $\rho_i$}\\
  &= \rho \iota \sigma_{2\ast} {\rm{pr}}_{i\ast} (h)(1) &
  \tag*{\scriptsize equation~\eqref{eq:projection}}\\
  &= C_\alpha^2 \rho \iota {\rm{pr}}_{i\ast} (h)(1)
  \tag*{\scriptsize equation~\eqref{eq:sigma2}}\\
  &= C_\alpha^2\rho_i(h)(1) &
  \tag*{\scriptsize definition of $\rho_i$}\\
  &= \alpha^2\rho_i(h)\alpha^{-2}(1) &
  \tag*{\scriptsize definition of $C_\alpha$} \\
  &= 1 & \tag*{\scriptsize as $\rho_i(h),\alpha \in
    \rm{Stab}_{S_5}(1)$}
\end{align*}
We have shown $\rho_i(\sigma_{3\ast}(h)) \in {\rm
  Stab}_{S_5}(1)$. Thus $\sigma_{3\ast} (H_i) \subset H_i$, and so
$\sigma_3$ lifts to the $125$-fold cover of $\Theta^3 - L$.

Since $\sigma_3$ fixes the vertex $(0,0,0) \in \Theta^3 - L$ and acts
freely on the link of $(0,0,0)$, we can choose a lift
$\widehat{\sigma}_3$ to the $125$-fold cover which fixes a vertex $v$
in the fiber over $(0,0,0)$, and acts freely on the link of this
vertex.  Thus the continuous extension $\eta\co Y \to Y$ of
$\widehat{\sigma}_3$ is an isometry which fixes a vertex of
$b^{-1}((0,0,0)) \subset Y$, and which acts freely on the link of this
vertex.
 
The square $\widehat{\sigma}_3^2$ also fixes $v$ and, covers the
identity map $\sigma_3^2 \co \Theta^3 - L \to \Theta^3 - L$. Thus
$\widehat{\sigma}_3^2$ is a deck transformation of the $125$-fold
cover of $\Theta^3 - L$ which fixes a point, and so is the identity
map. Its continuous extension $\eta^2$ is the identity map on $Y$, and
so $\eta$ is an order two isometry of $Y$.

Let $K$ be the kernel of the map $\pi_1(Y) \to \pi_1(S^1)$.  By
Remark~\ref{rk:property} $\pi_1(Y)$ is hyperbolic, and $K$ is finitely
presented but not of type F$_3$.  Since $\eta$ has finite order, the
subgroup $K \rtimes \la \eta \ra \subset \pi_1(Y) \rtimes \la \eta
\ra$ is a finitely presented subgroup of the hyperbolic group
$\pi_1(Y) \rtimes \la \eta \ra$ which has infinitely many conjugacy
classes of finite-order elements. The conjugacy class count follows
either from Proposition~\ref{prop:conj} above, or from Proposition~1.1
of \cite{BrCD}.


\bibliography{bibliography}

\def\cprime{$'$}
\begin{thebibliography}{1}

\bibitem{BeBr}
{\sc M.~Bestvina and N.~Brady}, {\em Morse theory and finiteness properties of
  groups}, Invent. Math., 129 (1997), pp.~445--470.

\bibitem{Br}
{\sc N.~Brady}, {\em Branched coverings of cubical complexes and subgroups of
  hyperbolic groups}, J. London Math. Soc. (2), 60 (1999), pp.~461--480.

\bibitem{BrCD}
{\sc N.~Brady, M.~Clay, and P.~Dani}, {\em Morse theory and conjugacy classes
  of finite subgroups}, Geom. Dedicata, 135 (2008), pp.~15--22.

\bibitem{BrMi}
{\sc N.~Brady and A.~Miller}, {\em {$\rm CAT(-1)$} structures for free-by-free
  groups}, Geom. Dedicata, 90 (2002), pp.~77--98.

\bibitem{Bridson00}
{\sc M.~R. Bridson}, {\em Finiteness properties for subgroups of {${\rm
  GL}(n,\mathbf Z)$}}, Math. Ann., 317 (2000), pp.~629--633.

\bibitem{brown}
{\sc K.~S. Brown}, {\em Finiteness properties of groups}, in Proceedings of the
  {N}orthwestern conference on cohomology of groups ({E}vanston, {I}ll., 1985),
  vol.~44, 1987, pp.~45--75.

\bibitem{feighn-mess}
{\sc M.~Feighn and G.~Mess}, {\em Conjugacy classes of finite subgroups of
  {K}leinian groups}, Amer. J. Math., 113 (1991), pp.~179--188.

\bibitem{LN}
{\sc I.~J. Leary and B.~E.~A. Nucinkis}, {\em Some groups of type {$VF$}},
  Invent. Math., 151 (2003), pp.~135--165.

\bibitem{Wise}
{\sc D.~T. Wise}, {\em Incoherent negatively curved groups}, Proc. Amer. Math.
  Soc., 126 (1998), pp.~957--964.

\end{thebibliography}
\bibliographystyle{siam}

\end{document}